\newcommand{\id}[1]{\mathcal{J}_{#1}}
\newcommand{\dualm}[1]{#1'}
\newcommand{\obar}[1]{\overline{#1}}
\newcommand{\pwrset}[1]{2^{#1}}
\newcommand{\field}[1]{\mathbbm{#1}}
\newcommand{\polr}[3]{\field{#1}[#2_1,\ldots,#2_{#3}]}
\newcommand{\hide}[1]{}
\newcommand{\tensorp}[2]{#1\otimes#2}
\newcommand{\F}{\mathcal{F}}
\newcommand{\maxim}[1]{\mathbf{#1}}
\newcommand{\bfid}[1]{\mathcal{F}\big(#1\big)}
\newcommand{\fid}[1]{\mathcal{F}(#1)}
\newcommand{\sfid}[1]{S\mathcal{F}(#1)}
\newcommand{\pgen}[2]{\mathbf{#1}^{#2}}
\newcommand{\sr}[1]{\field{k}[#1]}
\newcommand{\nnn}[1]{\mathbb{N}_{0}^{n}}
\newcommand{\nn}[1]{\mathbb{N}_{0}}
\newcommand{\n}[1]{\mathbb{N}}
\theoremstyle{plain}
\newtheorem{theorem}{Theorem}[section]
\newtheorem{lemma}[theorem]{Lemma}
\newtheorem{proposition}{Proposition}
\newtheorem{corollary}{Corollary}
\theoremstyle{definition}
\newtheorem{definition}{Definition}[section]
\theoremstyle{remark}
\newtheorem*{rem}{Remark}
\DeclareMathOperator{\rk}{rk}
\DeclareMathOperator{\im}{im}
\DeclareMathOperator{\tor}{Tor}
\begin{document}

\author{Trygve Johnsen, Jan Roksvold\thanks{Corresponding author. E-mail address: \texttt{jan.n.roksvold@uit.no}}, Hugues Verdure}
\affil{Department of Mathematics, University of Troms\o,\\ N-9037 Troms\o, Norway}
\title{Betti numbers associated to the facet ideal of a matroid\footnote{The original publication is available at
http://link.springer.com/article/10.1007/s00574-014-0071-9}}
\author{Trygve Johnsen \and Jan Roksvold\and Hugues Verdure}
\maketitle
\begin{abstract}
\noindent To a matroid $M$ with $n$ edges, we associate the so-called facet ideal $\fid{M}\subset \polr{k}{x}{n}$, generated by monomials corresponding to bases of $M$. We show that when $M$ is a graph, the Betti numbers related to an $\nn{N}$-graded minimal free resolution of $\fid{M}$ are determined by the Betti numbers related to the blocks of $M.$ Similarly, we show that the higher weight hierarchy of $M$ is determined by the weight hierarchies of the blocks, as well. Drawing on these results, we show that when $M$ is the cycle matroid of a cactus graph, the Betti numbers determine the higher weight hierarchy -- and vice versa. Finally, we demonstrate by way of counterexamples that this fails to hold for outerplanar graphs in general.    
\end{abstract}

\section{Introduction}
By \emph{matroid} we shall, throughout, be referring to a finite matroid. So let $M=\big(E(M),\mathcal{I}(M)\big)$ be a matroid, with edge set and set of independent sets $E(M)$ and $\mathcal{I}(M)$, respectively. We denote the set of bases $\mathcal{B}(M)$. Whenever $\sigma\subset E(M)$, then $\big(\sigma,\{I\cap\sigma:I\in\mathcal{I}(M)\}\big)$ is of course itself a matroid. We shall denote this matroid simply as $\sigma$ as well. In other words, when dealing with a subset of $E(M)$, we shall throughout be considering it \textit{as a submatroid}. 

Several of the invariants associated to a matroid are found to be natural generalizations of corresponding invariants for codes, graphs or simplicial complexes. It is natural to study the interplay between such invariants, and how invariants of substructures determine the corresponding invariants of the ``global'' structure. One such set of invariants is the \textit{higher weight} hierarchy \[d_i(M)=\min\{|\tau|:\tau\subset E(M), |\tau|-\rk(\tau) = i\},\] where $\rk(\sigma)$ denotes denotes the rank of $\sigma$. (That is: the cardinality of its largest independent subset.) Note that if $M$ is the vectorial matroid derived from the parity check matrix of a linear code, then the higher weights of $M$ are equal to the higher Hamming-weights of the code. 

Another set of invariants is the so-called Betti numbers, whose algebraic nature requires us to establish a certain terminology. So, let $S=\polr{k}{x}{n}$ be the polynomial ring in $n$ variables over the field $\field{k}$, and let $\maxim{m}=\langle x_1,x_2,\cdots,x_n\rangle$. A complex \[\mathbf{X}:\begin{CD}
\cdots@<<<X_{i-1}@<\phi_{i}<<X_{i}@<<<\cdots                                   
                                   \end{CD}
\]over $S$ is said to be \textit{minimal} whenever $\im\phi_i\subset \maxim{m} X_{i-1}$ for each $i.$  

A \textit{minimal (ungraded) free resolution} of an $S$-module $N,$ is a minimal left complex
\[\begin{CD}
0@<<<F_{0}@<\phi_1<<F_{1}@<\phi_2<<F_{2}@<<<\cdots 
\end{CD}\]
where $F_i=S^{\beta_i}$ for some $\beta_i\in \nn{n}$, and which is exact everywhere except for in $F_0$, where $F_0/\im\phi_1\cong N.$ 

If $N$ is $\nn{N}$- or $\nnn{N}$-graded, we may form $\nn{N}$\textit{-} or $\nnn{N}$\textit{-graded minimal free resolutions}, in which case \[F_i=S(-r_1)^{\beta_{i,1}}\oplus S(-r_2)^{\beta_{i,2}}\oplus\cdots\oplus S(-r_l)^{\beta_{i,l}}\] for some integers $r_j,$ or \[F_i=\bigoplus_{\mathbf{a}\in\nnn{N}}S(-\mathbf{a})^{\beta_{i,\mathbf{a}}},\] respectively. In both of these latter cases we also require the boundary maps to be degree-preserving. The global Betti numbers $\{\beta_i\}$ of an ungraded resolution, the $\nn{N}$-graded Betti numbers $\{\beta_{i,j}\},$ and the $\nnn{N}$-graded Betti numbers $\{\beta_{i,\mathbf{a}}\}$ are all invariants of $N$, as any two (graded/ungraded) minimal free resolutions are isomorphic.
Choosing $N$ to be certain $S$-modules connected to the matroid $M,$ these Betti numbers become matroidal invariants as well. A frequently studied example is when $N$ is the so-called \textit{Stanley-Reisner ideal} $\mathcal{J}_M\subset S,$ generated by monomials corresponding to minimal non-faces (circuits) of the matroid. In \cite{JV}, by the first and third author, one clarifies the connection between higher weights and the Stanley-Reisner ideal.

Alternatively, one might study the \textit{facet ideal} $\mathcal{F}(M)$ of $S$, generated by monomials corresponding to bases of $M$. This ideal is investigated in e.g.~\cite{Far}. In this paper we shall be inspired by graphic matroids and ($\nn{N}$- and ungraded) minimal free resolutions of their facet ideals. Generalizing the concepts of $2$\textit{-connected} and a \textit{block}, familiar from the theory of graphs, we find that the $\nn{N}$-graded Betti numbers of a matroid are determined by the $\nn{N}$-graded Betti numbers of each of its blocks. This is done in Section \ref{cg}, where we give a concrete and easy method for computing the Betti numbers of any matroid given the Betti numbers of each of its blocks. 

A straightforward proof of the fact that $\F(M)$ is actually the Stanley-Reisner ideal of the Alexander dual of the matroid dual of $M$ is found in Section \ref{two}, for the benefit of the reader. As a result of this connection, minimal resolutions of facet ideals of matroids (from now on: \textit{matroidal} facet ideals) are particularly simple.

The Betti numbers of the facet ideal always give full information about the face numbers of the dual matroid $\dualm{M}$,
and therefore the first Hamming weight $d_1$ of $\dualm{M}$ (See Remark \ref{mindist} below). From a coding-theoretical point of view, this is in itself a reason for being interested in Betti numbers of a matroidal facet ideal; for whenever  $\dualm{M}$ corresponds to linear dependence amongst columns of a generator matrix for some code, the Betti numbers thus determine the code's minimum distance.

Complementing the result obtained in Section \ref{cg}, we demonstrate in Section \ref{H-weights} that the higher weights of a matroid are also determined by, and easily computed from, the higher weights of each of its blocks. 

A natural and clearly related question is whether the Betti numbers of a matroidal facet ideal determine the higher weight hierarchy of the matroid. As can be seen in e.g.~\cite{JV}, this is not true in general. One could however imagine that they do so for particularly well-behaved subclasses. Indeed, as an application of our main result, we show in Section \ref{cactus} that for graphic matroids stemming from cactus graphs, which are outerplanar, the higher weight hierarchy and the ordered set of $\nn{N}$-graded Betti numbers associated to the facet ideal do in fact determine each other.

In Section \ref{og}, we demonstrate, by way of counterexamples, that this fails to be the case for outerplanar graphs in general. This is an indication of how far the Betti numbers are from determining the full weight hierarchy in general. 

\section{The matroidal facet ideal}\label{two}
In this section we define the facet ideal of a simplicial complex, and identify it as the Stanley-Reisner ideal of another simplicial complex -- arising from the original one through a sequence of duality operations. This, in turn, implies that a matroidal facet ideal has so called \textit{linear} resolution over any field.

Let $\field{k}$ denote a field, and let $\Delta$ and $M$ be an (abstract) simplicial complex and a matroid, respectively, both on $[n]=\{1,\ldots,n\}$. (Recall that every matroid is also a simplicial complex.) For $\tau\subset [n],$ let $\mathbf{x}^{\tau}$ denote the square-free monomial in $\polr{k}{x}{n}$ that contains the factor $x_i$ if and only if $i \in \tau$. The \textit{Stanley-Reisner ideal} of $\Delta$ is the (square-free) monomial ideal \[\id{\Delta}=\langle \mathbf{x}^{\tau}:\tau\notin \Delta\rangle.\]

More particular to our studies shall be the following ideal, also treated in e.g.~\cite{Far}:
\begin{definition}
The \textit{facet ideal} of $\Delta$, is \[\fid{\Delta}=\langle \mathbf{x}^{\sigma}:\sigma\text{ is a facet of }\Delta\rangle.\] 
\end{definition}
Note that both the Stanley-Reisner ideal and the facet ideal are square-free and monomial, and that in the case of a matroid, the generators of the facet ideal correspond to bases of the matroid.

\begin{definition}

The \textit{Alexander dual} $\Delta ^*$ of $\Delta$, is \[\Delta ^*=\{\obar{\tau}\in[n]:\tau\notin\Delta\},\] while the \textit{dual matroid} $\dualm{M}$ of $M$ is \[\mathcal{B}(\dualm{M})=\{\overline{\beta}: \beta\in \mathcal{B}(M)\},\] where $\overline{\beta} = [n]\smallsetminus\beta$.

\end{definition}
\begin{proposition} \label{Alexander}
Let $M$ be a matroid, then $\fid{M}=\id{(\dualm{M})^*}.$
\end{proposition}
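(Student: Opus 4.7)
The plan is to establish equality of two squarefree monomial ideals by comparing their minimal monomial generators. By definition, $\fid{M}$ is generated by the monomials $\mathbf{x}^\beta$ for $\beta \in \mathcal{B}(M)$, so the goal is to show that these are also the minimal generators of $\id{(\dualm{M})^*}$.

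First, I would unwind the definition of $\id{(\dualm{M})^*}$: it is generated by $\mathbf{x}^\tau$ for $\tau \notin (\dualm{M})^*$. Using the set-theoretic identity $\rho \notin \Delta^*$ iff $\overline{\rho} \in \Delta$, which follows directly from the definition of the Alexander dual, this condition becomes $\overline{\tau} \in \dualm{M}$, i.e.\ $\overline{\tau}$ is an independent set in $\dualm{M}$.

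Next, I would translate independence in $\dualm{M}$ back into the language of $M$. A subset of a matroid is independent iff it sits inside some basis, so $\overline{\tau} \in \dualm{M}$ iff $\overline{\tau} \subset \overline{\beta}$ for some basis $\overline{\beta}$ of $\dualm{M}$, and by definition of $\dualm{M}$ this amounts to $\beta \in \mathcal{B}(M)$. Complementing inclusions, the condition $\overline{\tau} \subset \overline{\beta}$ rewrites as $\beta \subset \tau$.

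Putting the pieces together yields $\id{(\dualm{M})^*} = \langle \mathbf{x}^\tau : \beta \subset \tau \text{ for some } \beta \in \mathcal{B}(M)\rangle$. Since $\mathbf{x}^\beta$ divides $\mathbf{x}^\tau$ whenever $\beta \subset \tau$, the minimal generators of this ideal are exactly the $\mathbf{x}^\beta$ with $\beta \in \mathcal{B}(M)$, producing the desired equality $\id{(\dualm{M})^*} = \fid{M}$. The argument is essentially a chain of definitional equivalences; the only thing to watch is to keep the two complementation operations --- one coming from Alexander duality, one from matroid duality --- properly aligned, so that they compose to the identity on bases.
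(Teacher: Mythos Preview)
Your proof is correct and follows essentially the same route as the paper's: both unwind the Alexander dual to identify the non-faces of $(\dualm{M})^*$ with complements of independent sets of $\dualm{M}$, then pass from independent sets to bases (using that every independent set lies in a basis) and finally invoke the definition of matroid duality to land on $\mathcal{B}(M)$. Your use of the equivalence $\rho\notin\Delta^*\iff\overline{\rho}\in\Delta$ packages the first step slightly more cleanly, but the argument is the same in substance.
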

\begin{proof}
By definition, we have 
\begin{eqnarray*}(\dualm{M})^*&=&\{\obar{\beta}:\beta \not\in \mathcal{J}(\dualm{M})\},\text{ which is equal to} \\
&=&\pwrset{[n]} \smallsetminus \{\obar{\mu}:\mu\in \mathcal{J}(\dualm{M})\}.\end{eqnarray*} 
The Stanley-Reisner ideal of $(\dualm{M})^*$ then, is \[\id{(\dualm{M})^*}=\langle \mathbf{x}^{\obar{\mu}}:\mu\in \mathcal{J}(\dualm{M})\rangle.\]

Note that \[\langle \mathbf{x}^{\obar{\mu}}:\mu\in \mathcal{J}(\dualm{M})\rangle \subset \langle \mathbf{x}^{\obar{\mu}}:\mu\in \mathcal{B}(\dualm{M})\rangle;\] for if $\mu\in \mathcal{J}(\dualm{M})$, then $\mu\subset \beta$ for some $\beta\in \mathcal{B}(\dualm{M})$, such that $\obar{\beta}\subset\obar{\mu}$ and $\mathbf{x}^{\obar{\mu}}\subset\langle \mathbf{x}^{\obar{\beta}}\rangle.$ 

Since clearly \[\langle \mathbf{x}^{\obar{\mu}}:\mu\in \mathcal{J}(\dualm{M})\rangle \supset \langle \mathbf{x}^{\obar{\mu}}:\mu\in \mathcal{B}(\dualm{M})\rangle,\] we thus have \[\begin{array}{rcl}\id{(\dualm{M})^*}&=&\langle \mathbf{x}^{\obar{\mu}}:\mu\in \mathcal{J}(\dualm{M})\rangle\\
&=&\langle \mathbf{x}^{\obar{\mu}}:\mu\in \mathcal{B}(\dualm{M})\rangle\\
&=&\langle \mathbf{x}^{\sigma}:\sigma\in \mathcal{B}(M)\rangle\\
&=&\fid{M}.\end{array}\]
\qed\end{proof}

\begin{lemma}\label{Shell}
The facet ideal of a matroid $M$ has \textit{linear} minimal $\nn{N}$-graded free resolution. That is, a minimal free resolution of the form \[                                                                                   
0 \leftarrow S\big(-r\big)^{n_0}\leftarrow S\big(-(r+1)\big)^{n_1}\leftarrow\cdots\leftarrow S\big(-(r+l)\big)^{n_l}\leftarrow0,
\] where $r=\rk{(M)}$ and $l=|E(M)|-\rk{(M)}.$
\end{lemma}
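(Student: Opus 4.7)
The plan is to translate the lemma into a Stanley--Reisner question via Proposition~\ref{Alexander} and then invoke the Eagon--Reiner theorem. By the proposition, $\fid{M} = \id{(M')^*}$, and since every basis of $M$ has the common cardinality $r = \rk(M)$, the minimal monomial generators of $\fid{M}$ all sit in degree $r$. Thus $F_0 = S(-r)^{n_0}$ in any minimal $\nn{N}$-graded free resolution; what remains is to establish linearity of the resolution and the bound on its length.

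For linearity (i.e.\ that $F_i = S(-(r+i))^{n_i}$ for every $i$), I would cite the Eagon--Reiner theorem: the Stanley--Reisner ideal $\id{\Delta}$ has a linear minimal free resolution over $\field{k}$ if and only if the Alexander dual $\Delta^*$ is Cohen--Macaulay over $\field{k}$. Since Alexander duality is an involution on simplicial complexes, applying this to $\Delta = (M')^*$ reduces the task to verifying that the matroid $M'$ is Cohen--Macaulay. But every matroid complex is pure shellable by the classical result of Provan--Billera (or Bj\"orner), hence Cohen--Macaulay over every field. Linearity of the resolution of $\fid{M}$ follows.

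For the length, I would use the fact that any minimal $\nn{N}$-graded free resolution of a squarefree monomial ideal in $n$ variables satisfies $\beta_{i,j} = 0$ whenever $j > n$; this is a quick consequence of Hochster's formula, or of the observation that minimal syzygies of squarefree monomials are themselves supported on squarefree sets. Combined with linearity, the requirement $F_i = S(-(r+i))^{n_i}$ forces $r + i \leq n$, so $i \leq n - r = l$, and the resolution terminates at position $l$ as claimed. If one additionally wished to verify $n_l \neq 0$, Hochster applied to the full support $W = [n]$ together with combinatorial Alexander duality identifies the top Betti number with $\dim \tilde{H}_{n-r-1}(M'; \field{k})$, which is nonzero for a loopless matroid by the wedge-of-spheres structure.

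The main obstacle is the linearity step: both the reference to Eagon--Reiner and the shellability of matroid complexes are by now standard but nontrivial inputs from combinatorial commutative algebra, and they do essentially all of the work. Once linearity is in hand, the length bound is automatic from the squarefree monomial structure.
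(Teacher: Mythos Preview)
Your argument is correct and follows essentially the same route as the paper: both invoke Proposition~\ref{Alexander} to identify $\fid{M}$ with the Stanley--Reisner ideal of $(\dualm{M})^*$ and then appeal to the Eagon--Reiner results in \cite{ER}, the paper citing them by number while you unpack their content (linearity via the Eagon--Reiner theorem together with Cohen--Macaulayness of the matroid complex $\dualm{M}$). Your explicit verification of the length bound via the squarefree support constraint is a welcome addition that the paper leaves implicit in its citation.
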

\begin{proof}
This follows from \cite[Theorem 4 and Proposition 7]{ER} in combination with Proposition \ref{Alexander}.
\qed\end{proof}
\begin{rem}  \label{mindist}
Let $f_i(\Delta)$ denote the number of faces of dimension $i$ of the simplicial complex $\Delta$. From \cite[formula (1)]{ER} and \cite[Theorem 4]{ER} it follows that the Betti numbers of the facet ideal of a matroid $M$, in virtue of being the Stanley-Reisner ideal of $(\dualm{M})^*$, determine the face numbers $f_{i}(\dualm{M})$ of the dual matroid $\dualm{M}$. Consequently, these Betti numbers determine $d_1(\dualm{M})$ as well, since \[d_1(\dualm{M})=\min\left\{|\tau|:\tau\subset E(M), |\tau|-\rk_{\dualm{M}}(\tau) = 1\right\}=\min\left\{i : f'_{i-1} \ne \binom{n}{i}\right\}.\] In particular, when $M$ is the vectorial matroid derived from the parity check matrix of a linear code $C$ we thus see that the Betti numbers associated to $M$  determine the minimum distance of the dual code $C^{\perp}$. Through Wei duality then, they also give \textit{some} information about the higher weights of $C$ itself -- see \cite{Wei}.\end{rem}

\section{Blocks and Betti numbers} \label{cg}
Since every graphic matroid is isomorphic to the cycle matroid of some connected graph, there is no real parallel for matroids to the notion of a $1$-connected graph. In order to describe a property of matroids similar to that of being $2$-connected (for graphs), one introduces the relation $\xi$ on $E(M),$ where $e\ \xi \ f$ if either $e=f$ or if there is some circuit containing both $e$ and $f$. For a proof that this constitutes an equivalence relation on $E(M)$ see \cite[Proposition 4.1.2]{Oxl}. The equivalence classes of $\xi$ are referred to as the (connected) components or \textit{blocks} of $M$. Whenever $E(M)$ is either empty or itself a block, $M$ is said to be \textit{connected}. 

Now let $S=\polr{k}{x}{n}.$ If $m\leq n$ and $I$ is an ideal in \[\polr{k}{x}{m}=S',\] we let $SI$ denote the $S$-ideal generated by the same generators as $I$. That is, if \[I=\langle g_1,\ldots, g_k\rangle\subset S',\] then \[SI=\{s_1g_1+\cdots+s_kg_k: s_i\in S\}.\] 

\begin{proposition}\label{fid}
Let $B_1,B_2,\dots,B_t$ be the blocks of a matroid $M$. Then \[\fid{M}=\big(\sfid{B_1}\big)\big(\sfid{B_2}\big)\cdots\big(\sfid{B_t}\big).\] 
\end{proposition}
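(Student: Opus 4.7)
The plan is to match generators on both sides by invoking the standard fact that, since the blocks partition $E(M)$, the matroid $M$ is the direct sum $B_1\oplus\cdots\oplus B_t$, and hence the bases of $M$ are exactly the disjoint unions $\beta_1\sqcup\cdots\sqcup\beta_t$ with $\beta_i\in\mathcal{B}(B_i)$. This is the only nontrivial ingredient; everything else is bookkeeping with monomials.

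First, I would recall (with a pointer to \cite{Oxl}) that the blocks are the connected components of $M$ in the matroid-theoretic sense, and that consequently $\mathcal{B}(M) = \{\beta_1 \sqcup \cdots \sqcup \beta_t : \beta_i \in \mathcal{B}(B_i)\}$. Since $E(B_i) \cap E(B_j) = \emptyset$ for $i \ne j$, the monomial associated with such a basis factors as
\[
\mathbf{x}^{\beta_1 \sqcup \cdots \sqcup \beta_t} \;=\; \mathbf{x}^{\beta_1}\,\mathbf{x}^{\beta_2}\cdots \mathbf{x}^{\beta_t},
\]
where each $\mathbf{x}^{\beta_i}$ is a generator of $\sfid{B_i}$.

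For the inclusion $\fid{M}\subset \bigl(\sfid{B_1}\bigr)\cdots\bigl(\sfid{B_t}\bigr)$, I would take an arbitrary generator $\mathbf{x}^{\beta}$ of $\fid{M}$, use the decomposition above to write $\beta = \beta_1 \sqcup \cdots \sqcup \beta_t$ with $\beta_i \in \mathcal{B}(B_i)$, and observe that the factorization displayed above exhibits $\mathbf{x}^{\beta}$ as a product of one generator from each $\sfid{B_i}$, hence as an element of the product ideal. For the reverse inclusion, a generator of the product ideal has the form $\mathbf{x}^{\beta_1}\cdots \mathbf{x}^{\beta_t}$ with $\beta_i\in\mathcal{B}(B_i)$; the disjointness of the $E(B_i)$ lets me collapse this product to $\mathbf{x}^{\beta_1\sqcup\cdots\sqcup \beta_t}$, and the same disjoint-union description of $\mathcal{B}(M)$ certifies that $\beta_1\sqcup\cdots\sqcup \beta_t \in \mathcal{B}(M)$, so this monomial lies in $\fid{M}$. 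Since both ideals are monomial and generated by square-free monomials, matching generators suffices.

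The only step that requires any care is the disjoint-union description of $\mathcal{B}(M)$ in terms of $\mathcal{B}(B_i)$, and I anticipate this being the main (though very mild) obstacle — it follows from the definition of $\xi$ together with the fact that a circuit of $M$ lies entirely inside a single block, which in turn implies independence in $M$ reduces to independence in each $B_i$. Once this is in hand, the proof is a one-line factorization argument on each side.
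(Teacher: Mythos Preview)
Your proposal is correct and follows essentially the same approach as the paper: both arguments match the monomial generators on each side by identifying bases of $M$ with disjoint unions of bases of the $B_i$. The only difference is packaging---you invoke the direct-sum decomposition $M=B_1\oplus\cdots\oplus B_t$ as a black box from \cite{Oxl}, whereas the paper re-derives the needed facts (that $\sigma\cap B_i$ is a basis of $B_i$ for each basis $\sigma$ of $M$, and that a union of block-bases is spanning) directly from the definition of the equivalence relation $\xi$.
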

\begin{proof}
Observe that both $\fid{M}$ and $\big(\sfid{B_1}\big)\big(\sfid{B_2}\big)\cdots\big(\sfid{B_t}\big)$ are square-free monomial ideals. Furthermore, the generating set defining each of these ideals are both minimal with respect to cardinality. It is well known that every monomial ideal has a \textit{unique} minimal set of monomial generators; see e.g.~\cite[p.~4, Lemma 1.2]{MS}. 

Let $\pgen{x}{\sigma}$ be a generator for $\fid{M}.$ In other words: Let $\sigma$ be a basis for $M.$ Then $\sigma\cap B_i$ does not contain a circuit, and is thus independent in $B_i$. Now assume that $B_i\neq\sigma\cap B_i,$ and let $e\in B_i\smallsetminus(\sigma\cap B_i).$ Since $\sigma$ is a basis, $\sigma\cup e$ will contain a circuit. Furthermore, since $B_i$ is an equivalence class, this circuit will be contained in $B_i.$ In other words, $\sigma\cap B_i$ is a basis for $B_i$. Similarly, if $B_i=\sigma\cap B_i$ then, since any block with more than two elements must contain a circuit, we necessarily have that $|B_i|=1$ and thus that $\sigma\cap B_i$ is a basis for $B_i.$  

Since $\sigma=\bigcup_{i=1}^{t}\sigma\cap B_i$, we conclude that \[\pgen{x}{\sigma}=\pgen{x}{\cup_{i=1}^{t}\sigma\cap B_i}=\prod_{i=1}^{t}\pgen{x}{\sigma\cap B_i}\in\big(\sfid{B_1}\big)\big(\sfid{B_2}\big)\cdots\big(\sfid{B_t}\big).\]

Conversely, let $\prod_{i=1}^{t}\pgen{x}{\tau_i}=\pgen{x}{\cup_{i=1}^{t}\tau_i}$ be a generator for $\big(\sfid{B_1}\big)\big(\sfid{B_2}\big)\cdots\big(\sfid{B_t}\big).$ Then $\bigcup_{i=1}^{t}\tau_i$ contains some basis $\sigma$ of $M$. For if $e\in E(M)\smallsetminus(\bigcup_{i=1}^{t}\tau_i)$, then $e\cup\tau_i$ contains a circuit for some $i$ -- which implies that $e\cup(\bigcup_{i=1}^{t}\tau_i)$ contains this circuit as well. Consequently, \[\pgen{x}{\cup_{i=1}^{t}\tau_i}\in \langle \pgen{x}{\sigma}\rangle\subset\fid{M},\] and this concludes our proof. 
\qed\end{proof}

Proposition \ref{fid} is key to the proof of Theorem \ref{Tigran}, stated below. We point out that if $m\leq n$ and $I\subset S'=\polr{k}{x}{m}$ is an ideal with minimal graded free resolution \[
\begin{CD}                                                                                   
0@<<<F_0@<\phi_1<<F_1@<<<\cdots@<\phi_l<<F_l@<<<0,
\end{CD}
\] where $F_i=\bigoplus_{j=1}^{n_i}S'(-r_{i,j}),$
then \[
\begin{CD}                                                                                   
0@<<<S\otimes_{S'}F_0@<1_S\otimes\phi_1<<S\otimes_{S'}F_1@<<<\cdots@<1_S\otimes\phi_l<<S\otimes_{S'}F_l@<<<0
\end{CD}
\]
is a minimal graded free resolution of the $S$-module $S\otimes_{S'}I$, with the same grading as the original one. 

Proof of the following proposition is deferred until the end of this section.
\begin{theorem}\label{Tigran}
Let $M$ be a matroid, and let $S=\polr{k}{x}{|E(M)|}$. Let $B_1,B_2,\ldots, B_t$ be the blocks of $M$. For each $1\leq i \leq t$, let \[0\leftarrow S\big(-r_i\big)^{n_0,i}\leftarrow S\big(-(r_i+1)\big)^{n_1,i}\leftarrow\cdots\leftarrow S\big(-(r_i+l_i)\big)^{n_{l_i},i}\leftarrow 0.\] be a (linear) $\nn{N}$-graded minimal free resolution of $S\fid{B_i}$. If \[l=l_1+l_2+\cdots+l_t,\] \[r=r_1+r_2+\cdots+r_t,\] and \[\beta_i=\sum_{u_1+u_2+\cdots+u_t=i}n_{u_1,1}n_{u_2,2}\cdots n_{u_t,t},\] then  \[ 0\leftarrow S\big(-r\big)^{\beta_0}\leftarrow S\big(-(r+1)\big)^{\beta_1}\leftarrow\cdots\leftarrow S\big(-(r+l)\big)^{\beta_l}\leftarrow 0\] is a  minimal free resolution of $\fid{M}$.
\end{theorem}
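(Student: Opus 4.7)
The plan is to combine Proposition \ref{fid} with the fact that when ideals are supported in pairwise disjoint variable sets, the tensor product of their minimal free resolutions is itself a minimal free resolution of the product ideal. By Proposition \ref{fid}, $\fid{M} = I_1 I_2 \cdots I_t$ where $I_i = S\fid{B_i}$. Since the blocks $B_1,\ldots,B_t$ partition $E(M)$, each $I_i$ is extended (via $\otimes_{\field{k}}$) from an ideal $J_i\subset S_i = \polr{k}{x}{|B_i|}$, with $S \cong S_1\otimes_{\field{k}}\cdots\otimes_{\field{k}}S_t$ and the $S_i$ using disjoint variables.

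The key technical step is the vanishing $\tor_p^S\bigl(I_1\cdots I_s,\, I_{s+1}\bigr) = 0$ for all $p\geq 1$, proved inductively on $s$. The base case $t=2$ contains all the content. Given two ideals $I = SI'$ and $J = SJ''$ extended from disjoint tensor factors $S'$ and $S''$ of $S$, one resolves $S/I$ by $F'_\bullet\otimes_{\field{k}} S''$, where $F'_\bullet$ is any $S'$-free resolution of $S'/I'$; this is a free $S$-resolution because tensoring over the field $\field{k}$ preserves exactness and turns a free $S'$-module into a free $S$-module. Computing Tor by tensoring with $S/J = S'\otimes_{\field{k}}(S''/J'')$ over $S$, the result collapses to $F'_\bullet\otimes_{\field{k}}(S''/J'')$, whose homology in positive degrees vanishes, again because every $\field{k}$-module is flat. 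Hence $\tor_p^S(S/I, S/J) = 0$ for $p\geq 1$, and two successive long exact Tor sequences arising from $0 \to I \to S \to S/I \to 0$ and $0 \to J \to S \to S/J \to 0$ give $\tor_p^S(I, J) = 0$ for all $p\geq 1$ together with $I\otimes_S J \cong IJ$. Consequently, the tensor product of minimal $S$-free resolutions of $I$ and $J$ is a free $S$-resolution of $IJ$, and the inductive step is immediate since $I_1\cdots I_s$ and $I_{s+1}$ are again supported in disjoint variables.

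It remains to verify minimality and the stated form. Minimality of the tensor product of minimal complexes follows at once from the total differential $d = d^1\otimes 1 \pm 1\otimes d^2$, both summands of which land in $\maxim{m}$ times the codomain. For ranks and grading, the homological degree-$i$ piece of the total tensor product equals $\bigoplus_{u_1+\cdots+u_t = i} S\bigl(-(r_1+u_1)-\cdots-(r_t+u_t)\bigr)^{n_{u_1,1}\cdots n_{u_t,t}}$, and since every summand has the same twist $r+i$ this collapses to $S\bigl(-(r+i)\bigr)^{\beta_i}$, matching the claim. The main obstacle will be the Tor vanishing: once the tensor-factor decomposition of $S$ is identified the argument is formal, but care is required both in passing from $\tor^S(S/I, S/J)$ to $\tor^S(I, J)$ and in propagating the vanishing through the induction over $t$.
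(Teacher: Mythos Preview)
Your proof is correct and follows essentially the same strategy as the paper: invoke Proposition~\ref{fid}, establish $\tor$-vanishing for ideals extended from disjoint variable sets by reducing to tensoring over $\field{k}$, and conclude that the tensor product of the block resolutions is a minimal free resolution of $\fid{M}$. The one minor difference is that you first prove $\tor_p^S(S/I,S/J)=0$ and then shift to $\tor_p^S(I,J)=0$ and $I\otimes_S J\cong IJ$ via two long exact sequences, whereas the paper (Lemmas~\ref{Andrei1}, \ref{Andrei2}, \ref{Isak}) works directly with the extended modules $S\otimes_{\sr{X}}M$ and $S\otimes_{\sr{Y}}N$ and avoids the detour through quotients; this makes the paper's argument slightly shorter but yours is equally valid.
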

We shall make use of the following shorthand:
\begin{flalign*}\field{k}[X]:=&\polr{k}{x}{m},\\
\field{k}[Y]:=&\polr{k}{y}{n},\\
S:=&\field{k}[x_1,\ldots,x_m,y_1,\ldots,y_n].
\end{flalign*} 
Note that if $M$ is a $\sr{X}$-module and $N$ is a $\sr{Y}$-module, the $k$-algebra isomorphism \[S\cong\sr{X}\otimes_\field{k}\sr{Y}\] gives $M\otimes_\field{k}N$ the structure of an $S$-module through $(f\otimes g)(m\otimes n)=fm\otimes gn$.
\begin{lemma}\label{Andrei1}
Let $M$ be a $\sr{X}$-module, and let $N$ be a $\sr{Y}$-module. Then \[\big(S\otimes_{\sr{X}}M\big)\otimes_S\big(S\otimes_{\sr{Y}}N\big)\cong M\otimes_\field{k}N\] as $S$-modules.
\end{lemma}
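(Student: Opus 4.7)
The plan is to construct explicit $S$-module homomorphisms in both directions and verify that they are mutually inverse, exploiting the identification $S \cong \sr{X}\otimes_\field{k}\sr{Y}$ so that every element of $S$ may be written as a finite sum of elementary tensors $x\otimes y$ with $x\in\sr{X}$ and $y\in\sr{Y}$.

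First I would define
\[\phi : M \otimes_\field{k} N \longrightarrow \big(S \otimes_{\sr{X}} M\big) \otimes_S \big(S \otimes_{\sr{Y}} N\big)\]
as the $\field{k}$-linear extension of the $\field{k}$-bilinear map $(m,n) \mapsto (1 \otimes m) \otimes (1 \otimes n)$. Here $M \otimes_\field{k} N$ carries the $S$-module structure coming from the hypothesis, namely $(x \otimes y)(m \otimes n) = xm \otimes yn$. To check that $\phi$ is $S$-linear it suffices, by $\field{k}$-bilinearity, to verify
\[\phi\bigl((x \otimes y)(m \otimes n)\bigr) = (xy) \cdot \phi(m \otimes n),\]
which follows by pulling $x$ across the tensor in $S \otimes_{\sr{X}} M$ and $y$ across the tensor in $S \otimes_{\sr{Y}} N$, then recombining.

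Going the other way, I would define
\[\psi : \big(S \otimes_{\sr{X}} M\big) \otimes_S \big(S \otimes_{\sr{Y}} N\big) \longrightarrow M \otimes_\field{k} N\]
on elementary tensors by $(s \otimes m) \otimes (t \otimes n) \mapsto st \cdot (m \otimes n)$. Well-definedness demands three separate balancing verifications: $\sr{X}$-balancing in the first factor, $\sr{Y}$-balancing in the second factor, and $S$-balancing across the middle tensor. Each reduces to commutativity of $S$ together with the fact that $\sr{X}$ and $\sr{Y}$ act on $M$ and $N$ respectively; the $S$-linearity of $\psi$ is then immediate from the definition.

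Finally I would verify that $\phi$ and $\psi$ are mutual inverses. The composite $\psi \circ \phi$ sends $m \otimes n$ to $1 \cdot (m \otimes n) = m \otimes n$. For $\phi \circ \psi$, the key observation is that in the target of $\phi$ one has
\[(s \otimes m) \otimes (t \otimes n) \;=\; st \cdot \bigl((1 \otimes m) \otimes (1 \otimes n)\bigr),\]
by pulling $s$ and $t$ out of their respective factors and across the middle $\otimes_S$, so that
\[\phi\bigl(\psi((s \otimes m) \otimes (t \otimes n))\bigr) \;=\; \phi\bigl(st \cdot (m \otimes n)\bigr) \;=\; st \cdot \phi(m \otimes n) \;=\; (s \otimes m) \otimes (t \otimes n).\]
I expect the main obstacle to lie in the bookkeeping for the well-definedness of $\psi$: three balancing conditions must be checked without confusing the roles of the three distinct tensor symbols, and it is essential to keep in mind that $\sr{X}$ and $\sr{Y}$ embed into $S$ as commuting subrings under the algebra isomorphism $S \cong \sr{X}\otimes_\field{k}\sr{Y}$.
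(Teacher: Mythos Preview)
Your argument is correct: the explicit maps $\phi$ and $\psi$ are well defined, $S$-linear, and mutually inverse, and the bookkeeping you flag (the three balancing conditions for $\psi$) is exactly the content that needs checking.

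The paper's proof, however, proceeds quite differently. It is a one-line chain of standard natural isomorphisms:
\[
\big(S\otimes_{\sr{X}}M\big)\otimes_S\big(S\otimes_{\sr{Y}}N\big)\;\cong\; M\otimes_{\sr{X}}\big(\sr{X}\otimes_{\field{k}}\sr{Y}\big)\otimes_{\sr{Y}}N \;\cong\; M\otimes_{\field{k}}N,
\]
using commutativity and associativity of tensor products together with the cancellation $R\otimes_R(-)\cong(-)$ and the identification $S\cong\sr{X}\otimes_\field{k}\sr{Y}$. Your approach is more elementary and entirely self-contained: it does not rely on the reader recalling (or believing) that these categorical identities respect all the module structures in play, and it makes the $S$-action on both sides completely transparent. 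The price is length and the tedium of the balancing checks. The paper's route is far shorter and more conceptual, but it sweeps under the rug precisely the compatibility verifications you carry out by hand; in effect, your explicit $\phi$ and $\psi$ \emph{are} the composite of the natural isomorphisms the paper invokes.
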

\begin{proof}
\[\big(S\otimes_{\sr{X}}M\big)\otimes_S\big(S\otimes_{\sr{Y}}N\big)\cong M\otimes_{\sr{X}}\big(\sr{X}\otimes_{\field{k}}\sr{Y}\big)\otimes_{\sr{Y}}N \cong M\otimes_{\field{k}}N.\]\qed

\end{proof}
\begin{lemma}\label{Andrei2}
Under the same conditions as in Lemma \ref{Andrei1}:
\[\tor_0^{S}\big(S\otimes_{\sr{X}}M,S\otimes_{\sr{Y}}N\big)\cong M\otimes_\field{k}N,\] and 
\[\tor_i^{S}\big(S\otimes_{\sr{X}}M,S\otimes_{\sr{Y}}N\big)=0\] for $i\geq1.$ 
\end{lemma}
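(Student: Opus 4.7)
The plan is to compute these Tor groups by building a convenient free resolution over $S$, reducing the computation to familiar facts over $\sr{X}$ and $\sr{Y}$, and then exploiting that $\field{k}$ is a field so everything in sight is $\field{k}$-flat.

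First I would unpack the two modules via the isomorphism $S\cong \sr{X}\otimes_{\field{k}}\sr{Y}$: one has $S\otimes_{\sr{X}}M\cong M\otimes_{\field{k}}\sr{Y}$ and $S\otimes_{\sr{Y}}N\cong \sr{X}\otimes_{\field{k}}N$. Next, pick a free $\sr{X}$-resolution $F_\bullet\to M\to 0$. Since $\sr{Y}$ is free (hence flat) over $\field{k}$, the complex $F_\bullet\otimes_{\field{k}}\sr{Y}$ is exact in positive degrees with $M\otimes_{\field{k}}\sr{Y}$ in degree $0$. Moreover each $F_i\otimes_{\field{k}}\sr{Y}$ is a free $S$-module, since if $F_i\cong\bigoplus \sr{X}$ then $F_i\otimes_{\field{k}}\sr{Y}\cong\bigoplus \sr{X}\otimes_{\field{k}}\sr{Y}\cong\bigoplus S$. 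Therefore $F_\bullet\otimes_{\field{k}}\sr{Y}\cong S\otimes_{\sr{X}}F_\bullet$ is a free $S$-resolution of $S\otimes_{\sr{X}}M$.

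Now compute the Tor groups via this resolution. By definition,
\[\tor_i^{S}\big(S\otimes_{\sr{X}}M,\,S\otimes_{\sr{Y}}N\big)=H_i\Big(\big(S\otimes_{\sr{X}}F_\bullet\big)\otimes_S\big(S\otimes_{\sr{Y}}N\big)\Big).\]
Lemma \ref{Andrei1}, applied levelwise, identifies this complex with $F_\bullet\otimes_{\field{k}}N$. Because $N$ is $\field{k}$-flat, the functor $-\otimes_{\field{k}}N$ preserves exactness, so the homology of $F_\bullet\otimes_{\field{k}}N$ equals $H_i(F_\bullet)\otimes_{\field{k}}N$. In degree $0$ this is $M\otimes_{\field{k}}N$, and in positive degrees it vanishes.

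The main thing to check carefully is that $F_\bullet\otimes_{\field{k}}\sr{Y}$ is actually isomorphic, as a complex of $S$-modules, to $S\otimes_{\sr{X}}F_\bullet$, and that the differentials coming from $F_\bullet$ agree under Lemma \ref{Andrei1}'s identifications. This is essentially a naturality check: the isomorphism in Lemma \ref{Andrei1} is functorial in $M$ and $N$, so the boundary maps on both sides correspond. Once this naturality is in place, the rest is a straightforward application of the flatness of $\field{k}$-modules, and the two conclusions of the lemma follow at once.
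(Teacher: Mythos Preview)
Your argument is correct and follows essentially the same route as the paper: take a free (projective) $\sr{X}$-resolution $F_\bullet$ of $M$, observe that $S\otimes_{\sr{X}}F_\bullet$ is a free $S$-resolution of $S\otimes_{\sr{X}}M$ (the paper justifies this via freeness of $S$ over $\sr{X}$, you via the description $F_\bullet\otimes_{\field{k}}\sr{Y}$), then apply Lemma~\ref{Andrei1} levelwise to identify the Tor-defining complex with $F_\bullet\otimes_{\field{k}}N$ and use $\field{k}$-flatness of $N$. Your explicit naturality remark is a welcome addition, but the substance of the two proofs is the same.
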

\begin{proof}
The first statement is immediate from Lemma \ref{Andrei1}. For the second statement, let \[\begin{CD}
0 @<<< P_0 @<<< P_1 @<<<\cdots @<<< P_l@<<< 0,                                                                                                  
                                                                                               \end{CD}\] 
be a projective $\sr{X}$-resolution of $M$. Since $S$ is free as a $\sr{X}$-module, the following is a projective $S$-resolution of $M\otimes_{\sr{X}}S$:
\[\minCDarrowwidth19pt
\begin{CD}
0 @<<< P_0\otimes_{\sr{X}}S @<<< P_1\otimes_{\sr{X}}S @<<<\cdots @<<< P_l\otimes_{\sr{X}}S@<<< 0.   
  \end{CD}
\] 
Tensoring with $S\otimes_{\sr{Y}}N$, we obtain the following complex over $\big(M\otimes_{\sr{X}}S\big)\otimes_{S}\big(S\otimes_{\sr{Y}}N\big):$
\[\minCDarrowwidth19pt\begin{CD}
0 @<<< \big(P_0\otimes_{\sr{X}}S\big)\otimes_{S}\big(S\otimes_{\sr{Y}}N\big) @<<< \big(P_1\otimes_{\sr{X}}S\big)\otimes_{S}\big(S\otimes_{\sr{Y}}N\big) @<<<\cdots\end{CD}\]
\[\minCDarrowwidth19pt\begin{CD} \ \ \ \ \ \ \cdots @<<< \big(P_l\otimes_{\sr{X}}S\big)\otimes_{S}\big(S\otimes_{\sr{Y}}N\big)@<<< 0.                                                               
                                                              \end{CD}
\] 
According to Lemma \ref{Andrei1}, this complex is isomorphic to \[\begin{CD}
0 @<<<P_0\otimes_{\field{k}}N @<<< P_1\otimes_{\field{k}}N @<<<\cdots @<<< P_l\otimes_{\field{k}}N@<<< 0,
                                                                       \end{CD}
\] which is a complex over $M\otimes_\field{k}N\cong\big(S\otimes_{\sr{X}}M\big)\otimes_S\big(S\otimes_{\sr{Y}}N\big)$. But $N$ is free as a $\field{k}$-module, so this latter sequence is exact (except for in $P_0\otimes_{\field{k}}N$).
\qed\end{proof}

Next, let 
\[\begin{CD}\mathcal{F}:0 @<<< F_0 @<\phi_1<< F_1 @<\phi_2<<\cdots @<\phi_r<< F_r @<<< 0                                                            
                                                                 \end{CD}\]
be a minimal free resolution of the $S$-module $S\otimes_{\sr{X}}M,$ and let 
\[
\begin{CD}
\mathcal{G}:0 @<<< G_0 @<\psi_1<< G_1 @<\psi_2<<\cdots @<\psi_s<< G_s @<<< 0,                                                                  
                                                                 \end{CD}\]
be a minimal free resolution of $S\otimes_{\sr{Y}}N.$ Extending the functor $(\bullet\otimes_S\bullet)$ to the translation category of complexes, as described in \cite{Nor}, we obtain a left complex $\mathcal{F}\otimes_S\mathcal{G}$ over $\big(S\otimes_{\sr{X}}M\big)\otimes\big(S\otimes_{\sr{Y}}N\big),$ for which, by definition: \[\big(\mathcal{F}\otimes_S\mathcal{G}\big)_i=\bigoplus_{u+v=i}F_u\otimes_S G_v,\] and whose boundary maps $\mathbf{\mathbf{d}}_i:\big(\mathcal{F}\otimes_S\mathcal{G}\big)_i\rightarrow\big(\mathcal{F}\otimes_S\mathcal{G}\big)_{i-1}$ are given by \[\mathbf{d}_i\Big(\begin{bmatrix}0\\\vdots\\c_{uv}\\\vdots\\0\end{bmatrix}\Big)=\big(\tensorp{\phi_u}{1_{G_v}}\big)(c_{uv})+(-1)^u\big(\tensorp{1_{F_u}}{\psi_v}\big)(c_{uv}).\]
\begin{lemma}\label{Levon}
The left complex \[
\minCDarrowwidth23pt\begin{CD}
0 @<<<\big(\mathcal{F}\otimes_S\mathcal{G}\big)_0 @<\mathbf{d}_1<< \big(\mathcal{F}\otimes_S\mathcal{G}\big)_1 @<\mathbf{d}_2<<\cdots @<\mathbf{d}_{r+s}<< \big(\mathcal{F}\otimes_S\mathcal{G}\big)_{r+s} @<<<0\end{CD}\] constitutes a minimal free resolution of the $S$-module
\[\big(S\otimes_{\sr{X}}M\big)\otimes_{S} \big(S\otimes_{\sr{Y}}N\big).\]
\end{lemma}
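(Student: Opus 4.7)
The plan is to verify separately that $\mathcal{F}\otimes_S\mathcal{G}$ is a free resolution of $(S\otimes_{\sr{X}}M)\otimes_S(S\otimes_{\sr{Y}}N)$ and that it is minimal. Each term $(\mathcal{F}\otimes_S\mathcal{G})_i=\bigoplus_{u+v=i}F_u\otimes_S G_v$ is free over $S$, because the $S$-tensor product of finitely generated free $S$-modules is free; in particular, all homological dimension data will be bookkeeping of graded shifts.

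For exactness and identification of the module being resolved, I would view $\mathcal{F}\otimes_S\mathcal{G}$ as the total complex of the first-quadrant double complex with $(u,v)$-entry $F_u\otimes_S G_v$ and the tensor-product differentials prescribed in the statement. Writing $A=S\otimes_{\sr{X}}M$ and $B=S\otimes_{\sr{Y}}N$, the fact that each $G_v$ is free (hence flat) over $S$ means the row $\mathcal{F}\otimes_S G_v$ is exact except at $u=0$, where its homology is $A\otimes_S G_v$. In the spectral sequence obtained by taking horizontal homology first, the $E_1$ page therefore consists of the single column $A\otimes_S G_\bullet$; its vertical homology at position $v$ is, by definition, $\tor_v^S(A,B)$, which by Lemma \ref{Andrei2} vanishes for $v\geq 1$ and equals $M\otimes_\field{k}N\cong A\otimes_S B$ (via Lemma \ref{Andrei1}) for $v=0$. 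Since the spectral sequence collapses at $E_2$ and converges to the homology of the total complex, $\mathcal{F}\otimes_S\mathcal{G}$ is exact in positive degrees with $H_0\cong A\otimes_S B$, exactly as required.

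For minimality, by hypothesis $\phi_u(F_u)\subset\maxim{m}F_{u-1}$ and $\psi_v(G_v)\subset\maxim{m}G_{v-1}$, where $\maxim{m}$ denotes the ideal of $S$ generated by all the variables. The explicit formula for $\mathbf{d}_i$ given in the statement then yields \[\mathbf{d}_i(F_u\otimes_S G_v)\subset\maxim{m}(F_{u-1}\otimes_S G_v)+\maxim{m}(F_u\otimes_S G_{v-1})\subset\maxim{m}(\mathcal{F}\otimes_S\mathcal{G})_{i-1},\] which is precisely the minimality condition; the sign $(-1)^u$ does not matter since $\maxim{m}$ is closed under negation.

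The main obstacle is the spectral sequence bookkeeping in the exactness step; however, because Lemma \ref{Andrei2} already supplies the vanishing of the higher $\tor^S$ groups and Lemma \ref{Andrei1} identifies $\tor_0^S$, the argument reduces to a routine application of the K\"unneth-style double-complex machinery rather than anything substantial.
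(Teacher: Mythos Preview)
Your argument is correct and follows essentially the same route as the paper: identify the homology of $\mathcal{F}\otimes_S\mathcal{G}$ with $\tor_\bullet^S(S\otimes_{\sr{X}}M,\,S\otimes_{\sr{Y}}N)$, invoke Lemma~\ref{Andrei2} for the vanishing in positive degrees, and deduce minimality from that of $\mathcal{F}$ and $\mathcal{G}$. The only difference is that the paper cites the balanced-Tor description from \cite{Nor} directly, whereas you unpack it via the double-complex spectral sequence, and you spell out the minimality check explicitly; neither constitutes a genuinely different approach.
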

\begin{proof}
By definition of the torsion functor, as given in e.g.~\cite[p.~121]{Nor}, we have \[H_i(\mathcal{F}\otimes_S\mathcal{G})=\tor_i\Big(\big(S\otimes_{\sr{X}}M\big)\otimes\big(S\otimes_{\sr{Y}}N\big)\Big),\] which in combination with Lemma \ref{Andrei2} implies that our resolution is free. Minimality follows from minimality of $\mathcal{F}$ and $\mathcal{G}$.
\qed\end{proof}

The above \textit{``Künneth type''} result clearly extends, by way of induction, to any finite number of modules (of the specified kind).

\begin{corollary}\label{Iben}
In the above notation, let $S=\field{k}[X_1;X_2;\dots;X_t]$, and, for each $1\leq i\leq t,$ let $M_i$ denote a $\sr{X_i}$-module. If the $S$-module $S\otimes_{\sr{X_i}}M_i$ has minimal free resolution \[\begin{CD}
0@<<<F_{i,0}@<<<F_{i,1}@<<<\cdots@<<<F_{i,l_i}@<<<0,                                                                    
                                                                   \end{CD}
\] 
then the $S$-module \[\big(S\otimes_{\sr{X_1}}M_1\big)\otimes_S \big(S\otimes_{\sr{X_2}}M_2\big)\otimes_S \cdots\otimes_S \big(S\otimes_{\sr{X_t}}M_t\big)\] has minimal free resolution
\[\begin{CD}
0@<<<P_0@<<<P_1@<<<\cdots@<<<P_{l_1+l_2+\cdots+l_t}@<<<0,                                                                    
                                                                   \end{CD}
\]
where \[P_i=\bigoplus_{u_1+u_2+\cdots+u_t=i}\Big(F_{1,u_1}\otimes_S F_{2,u_2}\otimes_S \cdots\otimes_S F_{t,u_t}\Big).\] 
\end{corollary}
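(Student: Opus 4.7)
The plan is to argue by induction on $t$, with Lemma \ref{Levon} as the engine. The base case $t=2$ is Lemma \ref{Levon} itself, after the relabelings $\sr{X}\mapsto\sr{X_1}$, $\sr{Y}\mapsto\sr{X_2}$, $M\mapsto M_1$, $N\mapsto M_2$.

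For the inductive step I set $S'=\field{k}[X_1;X_2;\dots;X_{t-1}]$, so that $S\cong S'\otimes_{\field{k}}\sr{X_t}$ and $S$ is free (hence flat) over $S'$. Since each inclusion $\sr{X_i}\hookrightarrow S'$ is also a flat polynomial extension, a minimal free $\sr{X_i}$-resolution of $M_i$ base-changes to a minimal free $S'$-resolution of $S'\otimes_{\sr{X_i}}M_i$; by uniqueness of minimal free resolutions these are, up to isomorphism, the same data as the $F_{i,\bullet}$. The inductive hypothesis, applied in the ambient ring $S'$, then furnishes a minimal free $S'$-resolution $\mathcal{P}'$ of $\big(S'\otimes_{\sr{X_1}}M_1\big)\otimes_{S'}\cdots\otimes_{S'}\big(S'\otimes_{\sr{X_{t-1}}}M_{t-1}\big)$ whose degree-$i$ term has the claimed direct-sum shape. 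Tensoring $\mathcal{P}'$ with $S$ over $S'$ yields a minimal free $S$-resolution, and iterated use of Lemma \ref{Andrei1} together with associativity of $\otimes$ identifies the resolved module with $\big(S\otimes_{\sr{X_1}}M_1\big)\otimes_{S}\cdots\otimes_{S}\big(S\otimes_{\sr{X_{t-1}}}M_{t-1}\big)$.

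I would then invoke Lemma \ref{Levon} one final time, placing this last module in the role of $M$ (with $S'$ playing the role of $\sr{X}$), and $M_t$ in the role of $N$ (with $\sr{X_t}$ as $\sr{Y}$). The resulting total complex is a minimal free $S$-resolution of $\big(S\otimes_{\sr{X_1}}M_1\big)\otimes_{S}\cdots\otimes_{S}\big(S\otimes_{\sr{X_t}}M_t\big)$, and unwinding the two-step tensor product of complexes gives its $i$-th term as $\bigoplus_{u_1+\cdots+u_t=i}F_{1,u_1}\otimes_{S}\cdots\otimes_{S}F_{t,u_t}$, which is the shape claimed in the statement.

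The main difficulty is bookkeeping rather than substance: one must check that the Koszul-signed differentials of the total tensor-product complex, the associativity isomorphisms for $\otimes_S$, and the base-change isomorphisms between $S'$- and $S$-tensor products all assemble so that the final indexing and boundary maps agree with those of a single ``one-shot'' iterated K\"unneth construction. Uniqueness of minimal $\nn{N}$-graded free resolutions then lets us identify the resolution produced by the induction with the one displayed in the corollary.
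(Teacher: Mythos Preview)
Your proposal is correct and follows the same approach as the paper: the corollary is stated immediately after the remark that ``the above \emph{K\"unneth type} result clearly extends, by way of induction, to any finite number of modules (of the specified kind),'' and this one-line induction from Lemma~\ref{Levon} is all the paper offers as proof. Your write-up simply spells out the base-change bookkeeping that the paper leaves implicit.
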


\begin{lemma}\label{Isak}
Let $I\subset \sr{X}$ and $J\subset \sr{Y}$ be ideals. Then \[\big(S\otimes_{\sr{X}}I\big)\otimes_{S}\big(S\otimes_{\sr{Y}}J\big)\cong\big(SI\big)\big(SJ\big)\] as $S$-modules. 
\end{lemma}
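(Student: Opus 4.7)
My plan is to reduce the statement to Lemma \ref{Andrei1}. Applying that lemma with $M = I$ and $N = J$ yields an $S$-module isomorphism
\[
\big(S\otimes_{\sr{X}}I\big)\otimes_{S}\big(S\otimes_{\sr{Y}}J\big) \;\cong\; I \otimes_\field{k} J,
\]
so it suffices to produce an $S$-module isomorphism $I \otimes_\field{k} J \xrightarrow{\sim} (SI)(SJ)$.

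The natural candidate is the multiplication map $\mu \colon I \otimes_\field{k} J \to S$ defined by $f \otimes g \mapsto fg$; I view it as the restriction to $I \otimes_\field{k} J$ of the $\field{k}$-algebra isomorphism $\sr{X}\otimes_\field{k}\sr{Y} \xrightarrow{\sim} S$ coming from the universal property of polynomial rings. That $\mu$ is $S$-linear for the $S$-module structure on the domain induced by $S \cong \sr{X}\otimes_\field{k}\sr{Y}$ is a routine bilinearity check: $\mu(x_i f \otimes g) = x_i fg = x_i\mu(f\otimes g)$, and symmetrically for the $y_j$.

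Injectivity of $\mu$ follows from flatness over the field $\field{k}$. Since every $\field{k}$-module is flat, tensoring the inclusions $I \hookrightarrow \sr{X}$ and $J \hookrightarrow \sr{Y}$ over $\field{k}$ preserves injectivity, so $I \otimes_\field{k} J$ embeds into $\sr{X}\otimes_\field{k}\sr{Y} \cong S$; this embedding is precisely $\mu$. For the image, if $I$ is generated (as an $\sr{X}$-ideal) by $\{f_\alpha\}$ and $J$ (as an $\sr{Y}$-ideal) by $\{g_\beta\}$, then $(SI)(SJ)$ is the $S$-ideal generated by the products $f_\alpha g_\beta$. Each of these is $\mu(f_\alpha\otimes g_\beta)\in\im\mu$, and since $\im\mu$ is itself an $S$-submodule of $S$ we get $(SI)(SJ)\subseteq\im\mu$; the reverse inclusion is immediate because $\im\mu$ is $S$-linearly spanned by products $fg$ with $f\in I$, $g\in J$, each of which lies in $(SI)(SJ)$.

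The only real bookkeeping is in keeping the three module structures ($\sr{X}$-, $\sr{Y}$-, and $S$-) straight and recognizing that once $\mu$ is identified with the restriction of the polynomial-ring isomorphism, both $S$-linearity and injectivity are essentially automatic; I do not anticipate any deeper obstacle.
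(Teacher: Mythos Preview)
Your proof is correct and follows exactly the paper's approach: reduce via Lemma~\ref{Andrei1} to the claim $I\otimes_\field{k}J\cong (SI)(SJ)$, which the paper simply declares ``easily seen to be true.'' You have supplied the details the paper omits (the multiplication map, its $S$-linearity, injectivity via flatness over $\field{k}$, and the image computation), so this is the same argument, fully fleshed out.
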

\begin{proof}
In light of Lemma \ref{Andrei1} it suffices to establish $\big(SI\big)\big(SJ\big)\cong I\otimes_{\field{k}}J,$  which is easily seen to be true. \qed\end{proof} 

We now have all we need to prove Theorem \ref{Tigran}. 

\begin{proof}[of Theorem \ref{Tigran}]
The result now follows from combining Lemma \ref{Isak} and Corollary \ref{Iben}, together with our initial observation that \[\fid{M}=\big(\sfid{B_1}\big)\big(\sfid{B_2}\big)\cdots\big(\sfid{B_t}\big).\] \qed
\end{proof}

\section{The higher weights}\label{H-weights}
Let $M$ be a matroid. In this section we shall draw on a result from \cite{JV} which implies that the higher weights of a matroid are determined by certain \textit{non-redundant} sets of cycles. It shall follow immediately from this that the higher weights of the blocks determine those of the matroid itself.

Recall that $C(M)$ denotes the set of circuits of $M$.	
\begin{definition}
A subset $\Sigma$ of $C(M)$ is said to be \textit{non-redundant} if for all $\mu\in\Sigma$ we have \[\bigcup_{\tau\in(\Sigma\smallsetminus\mu)}\tau\subsetneq\bigcup_{\tau\in\Sigma}\tau.\]
\end{definition}
Let $\sigma\subset E(M).$
\begin{definition}
The \textit{degree of non-redundancy} of $\sigma$, is 
\[\deg(\sigma)=\max\{n\in\nn\ :\tau_j\subset\sigma\text{ for }1\leq j\leq n \text{ and }\{\tau_1,\dots,\tau_n\} \text{ is non-redundant}\}.\] 
\end{definition}
 
\begin{lemma}\label{Hugues2}
\[|\sigma|-\rk(\sigma)=\deg(\sigma).\] 
\end{lemma}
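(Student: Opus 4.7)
The plan is to prove two inequalities separately. For the easy direction $\deg(\sigma) \geq |\sigma| - \rk(\sigma)$, I would fix a basis $\beta$ of $\sigma$ and, for each $e \in \sigma \smallsetminus \beta$, take the (unique) fundamental circuit $C_e \subset \beta \cup \{e\}$ obtained from the fact that $\beta$ is independent while $\beta \cup \{e\}$ is dependent. The family $\{C_e : e \in \sigma \smallsetminus \beta\}$ has cardinality $|\sigma| - \rk(\sigma)$ and is non-redundant, because $e \in C_e$ while $e \notin C_{e'}$ whenever $e' \ne e$ (since $C_{e'} \subset \beta \cup \{e'\}$). Hence $e$ witnesses the strict inclusion in the definition of non-redundancy applied to $C_e$.

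For the reverse inequality, I would take an arbitrary non-redundant family $\{\tau_1, \ldots, \tau_n\}$ of circuits in $\sigma$. By the very definition of non-redundancy, for each $i$ there exists some $e_i \in \tau_i$ lying in none of the $\tau_j$ with $j \ne i$; in particular the $e_i$ are distinct. Set $\tau = \bigcup_i \tau_i \subset \sigma$ and $A = \{e_1, \ldots, e_n\}$. The key claim is that $\rk(\tau \smallsetminus A) = \rk(\tau)$. Indeed, for each $i$, because $\tau_i$ is a circuit, $e_i$ lies in the closure of $\tau_i \smallsetminus \{e_i\}$; and $\tau_i \smallsetminus \{e_i\} \subset \tau \smallsetminus A$, since by choice of the $e_j$ no element of $A \smallsetminus \{e_i\}$ belongs to $\tau_i$. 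Thus every element of $A$ is in the closure of $\tau \smallsetminus A$, giving the claimed rank equality, whence $|\tau| - \rk(\tau) \geq |A| = n$.

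To descend from $\tau$ back to $\sigma$, I would invoke the standard fact that $\rk(\sigma) - \rk(\tau) \leq |\sigma \smallsetminus \tau|$ (a consequence of monotonicity and the unit-increment bound $\rk(\tau \cup \{e\}) \leq \rk(\tau) + 1$), which rearranges to $|\sigma| - \rk(\sigma) \geq |\tau| - \rk(\tau) \geq n$. Since this bound holds for every non-redundant family of circuits contained in $\sigma$, the inequality $\deg(\sigma) \leq |\sigma| - \rk(\sigma)$ follows.

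The main subtlety — more careful bookkeeping than a genuine obstacle — is extracting the witnesses $e_i$ cleanly from the set-theoretic formulation of non-redundancy (which requires only $\bigcup_{j \ne i} \tau_j \subsetneq \bigcup_j \tau_j$, not an a priori element) and then verifying that the disjointness $e_j \notin \tau_i$ for $j \ne i$ really does follow. Everything else reduces to the circuit-elimination/closure facts and the elementary inequalities for $\rk$.
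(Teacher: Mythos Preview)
Your argument is correct. Both inequalities are handled cleanly: the fundamental-circuit construction gives a non-redundant family of size $|\sigma|-\rk(\sigma)$, and for the reverse direction your extraction of witnesses $e_i$, the closure argument showing $\rk(\tau\smallsetminus A)=\rk(\tau)$, and the descent from $\tau$ to $\sigma$ via the unit-increment bound on rank are all valid. The point you flag as a subtlety is fine: if $\bigcup_{j\ne i}\tau_j\subsetneq\bigcup_j\tau_j$, any element of the difference must lie in $\tau_i$ and in no other $\tau_j$, so the $e_i$ exist and are pairwise distinct, and $e_j\notin\tau_i$ for $j\ne i$ is immediate from the choice of $e_j$.

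As for comparison with the paper: the paper does not give a proof at all but simply cites \cite[Proposition~1]{JV}. Your write-up therefore supplies a self-contained argument where the paper offers only a reference, which is a net gain for the reader; there is no competing approach in the paper to weigh against yours.
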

\begin{proof}
This is \cite[Proposition 1]{JV}. 
\qed\end{proof}
\begin{lemma}\label{hvekter}
\[d_i(\sigma)=\min\{|\tau_1\cup\cdots\cup\tau_i|:\tau_j\subset\sigma \text{ for }1\leq j\leq i \text{ and }\{\tau_1,\dots,\tau_i\} \text{ is non-redundant}\}.\] 
\end{lemma}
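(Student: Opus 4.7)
The plan is to prove the equality by showing each side bounds the other, using Lemma \ref{Hugues2} to translate between the ``rank deficiency'' formulation of $d_i$ and the ``non-redundant circuits'' formulation.

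For the inequality $d_i(\sigma)\leq \min\{|\tau_1\cup\cdots\cup\tau_i|:\ldots\}$, I would take a non-redundant family $\{\tau_1,\dots,\tau_i\}$ of circuits in $\sigma$ minimizing $|\tau_1\cup\cdots\cup\tau_i|$, and set $\tau=\tau_1\cup\cdots\cup\tau_i$. By the definition of $\deg$, we immediately have $\deg(\tau)\geq i$, and Lemma \ref{Hugues2} then gives $|\tau|-\rk(\tau)\geq i$. To land exactly on rank deficiency $i$, I would invoke the standard observation that removing a single element from a set decreases $|\cdot|-\rk(\cdot)$ by either $0$ or $1$ (since the rank drops by at most one); so by removing elements one at a time, starting from $\tau$ and ending at $\emptyset$, the quantity $|\cdot|-\rk(\cdot)$ must pass through the value $i$. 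This yields a subset $\tau'\subset\tau$ with $|\tau'|-\rk(\tau')=i$ and $|\tau'|\leq|\tau|$, establishing the first inequality.

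For the reverse inequality, I would take $\tau'\subset\sigma$ with $|\tau'|-\rk(\tau')=i$ and $|\tau'|=d_i(\sigma)$. Lemma \ref{Hugues2} gives $\deg(\tau')=i$, so there exists a non-redundant family $\{\tau_1,\dots,\tau_i\}$ of circuits contained in $\tau'$. Their union is contained in $\tau'$, hence $|\tau_1\cup\cdots\cup\tau_i|\leq|\tau'|=d_i(\sigma)$, as desired.

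The only non-routine point is the passage from $|\tau|-\rk(\tau)\geq i$ down to a subset of rank deficiency exactly $i$, but this is just the one-step decrement property of rank, so there is no real obstacle. Everything else is a direct application of Lemma \ref{Hugues2} together with the definitions of $d_i$, $\deg$, and non-redundancy.
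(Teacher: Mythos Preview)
Your proof is correct and follows the same approach as the paper, which simply declares the result ``immediate from Lemma~\ref{Hugues2}'' without spelling out the two inequalities. Your element-removal detour in the first direction is sound but not strictly needed: a minimizing non-redundant family $\{\tau_1,\dots,\tau_i\}$ must have $\deg(\tau_1\cup\cdots\cup\tau_i)=i$ exactly (otherwise a larger non-redundant family inside $\tau$ would yield $i$ circuits whose union is strictly smaller, contradicting minimality), so $\tau$ itself already witnesses $d_i(\sigma)\leq|\tau|$.
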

\begin{proof}
Immediate from Lemma \ref{Hugues2}. 
\qed\end{proof}
\begin{proposition}
Let $B_1,\dots,B_t$ be the blocks of $M$. With the convention $d_0=0$, we have \[d_i(M)=\min\left\{\sum_{j=1}^{t}d_{k_j}(B_j):\sum_{j=1}^{t}k_j=i\right\}.\] 
\end{proposition}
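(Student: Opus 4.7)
The plan is to exploit Lemma \ref{hvekter}, which reduces the computation of $d_i$ to minimising the size of the union over non-redundant collections of $i$ circuits. The decisive structural fact I will use is that every circuit of $M$ is entirely contained in a single block, since two elements lie on a common circuit precisely when they are $\xi$-equivalent. In particular, circuits living in distinct blocks are supported on disjoint subsets of $E(M)$.

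First I would prove the inequality $d_i(M) \le \min \sum_j d_{k_j}(B_j)$. Given any decomposition $k_1+\cdots+k_t=i$, choose, for each $j$, a non-redundant collection $\Sigma_j=\{\tau^{(j)}_1,\ldots,\tau^{(j)}_{k_j}\}$ of circuits of $B_j$ whose union has size $d_{k_j}(B_j)$; these exist by Lemma \ref{hvekter}. I would then verify that the disjoint union $\Sigma = \bigsqcup_j \Sigma_j$ is non-redundant in $M$: any witness element $e \in \tau^{(j)}_\ell$ of non-redundancy within $\Sigma_j$ is in $B_j$ and hence cannot belong to circuits from other blocks, so it remains a witness in $\Sigma$. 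Since the supports of the $\Sigma_j$ are pairwise disjoint, $|\bigcup_{\tau\in\Sigma}\tau| = \sum_j d_{k_j}(B_j)$, giving the desired upper bound by Lemma \ref{hvekter}.

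For the reverse inequality $d_i(M)\ge \min \sum_j d_{k_j}(B_j)$, take a non-redundant family $\{\tau_1,\ldots,\tau_i\}$ of circuits of $M$ realising $d_i(M)$. Partition it according to the containing block, setting $\Sigma_j = \{\tau_\ell : \tau_\ell \subseteq B_j\}$ and $k_j = |\Sigma_j|$, so $\sum_j k_j = i$. The unions $\bigcup_{\tau\in\Sigma_j}\tau \subseteq B_j$ are pairwise disjoint, hence
\[
d_i(M) \;=\; \Big|\bigcup_{\ell=1}^{i}\tau_\ell\Big| \;=\; \sum_{j=1}^{t}\Big|\bigcup_{\tau\in\Sigma_j}\tau\Big|.
\]
Each $\Sigma_j$ is non-redundant in $B_j$: if $\tau\in\Sigma_j$, the non-redundancy witness $e\in\tau$ from the ambient family is in $B_j$, and the other $\tau'\in\Sigma_j\setminus\{\tau\}$ are a subset of the other circuits in the ambient family, so $e$ still works. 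Lemma \ref{hvekter} then yields $|\bigcup_{\tau\in\Sigma_j}\tau|\ge d_{k_j}(B_j)$, so $d_i(M) \ge \sum_j d_{k_j}(B_j) \ge \min$. One should also handle the boundary case $k_j=0$ using the stated convention $d_0=0$, which is consistent with the empty (trivially non-redundant) collection having empty union.

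The main obstacle — really the only substantive point — is checking that non-redundancy transfers cleanly between $\Sigma$ and the blockwise pieces $\Sigma_j$ in both directions; this hinges entirely on the fact that distinct blocks have disjoint edge sets and that circuits respect the block decomposition, which I would spell out once and then invoke in both halves of the argument.
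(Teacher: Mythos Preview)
Your argument is correct and is essentially the same approach as the paper's: both rely on Lemma \ref{hvekter} together with the fact that every circuit of $M$ lies inside a single block, so that non-redundant families split cleanly along the block decomposition. The paper phrases this as an induction on the number $t$ of blocks with the step ``immediate from Lemma \ref{hvekter}''; your direct two-inequality argument is precisely the unrolled version of that induction, with the details spelled out.
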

\begin{proof}
By induction on the number $t$ of blocks; the induction step being an immediate consequence of Lemma \ref{hvekter}. 
\qed\end{proof}

\section{Cactus graphs} \label{cactus}
This section concerns a class of graphs normally referred to as cactus graphs or cacti. Applying the results obtained in Section \ref{cg}, we show that for cactus graphs with a known number of loops the set of higher weights and the ordered multiset of Betti numbers determine each other. As we shall see later on, this result does not extend to the superclass of outerplanar graphs.

\begin{definition}
A cactus graph is a finite, connected graph with the property that each block is either a cycle or a single edge. 
\end{definition}

Or equivalently: A finite, connected graph with the property that no pair of distinct cycles share an edge. Whenever $C_{1},C_{2},\ldots,C_{t}$ denote the cycles of a cactus graph, we let $n_i$ denote the length of $C_i$. We assume that $n_1\leq n_2\leq\cdots\leq n_t.$ 

A couple of initial observations: First, since the facet ideal of a graphic matroid has \textit{linear} $\nn{N}$-graded minimal free resolution over any field, the ungraded and $\nn{N}$-graded minimal free resolutions of $\bfid{M(G)}$ have the same Betti numbers. We shall therefore consider only ungraded minimal free resolutions throughout this section.

Secondly, observe that if $C_m$ is a cycle of length $m$, and $E$ is a graph containing only one edge (possibly a loop), then $\bfid{M(C_m)}$ has minimal (ungraded) free resolution \[\begin{CD}0@<<<S^{m}@<<<S^{m-1}@<<<0,\end{CD}                                                                                                              \] while $\bfid{M(E)}$ has minimal free resolution \[\begin{CD}0@<<<S@<<<0.\end{CD}\] In combination with Theorem \ref{Tigran} it follows that the minimal free resolution of $\bfid{M(C_m)}$ is equal to the minimal free resolution of $\bfid{M(C_m\cup E)}.$ This, in turn, implies that if $G$ is a cactus graph whose cycles are $C_{1},C_{2},\ldots,C_{t},$  then the minimal free resolution of $\bfid{M(G)}$ is equal to the minimal free resolution of $\bfid{M(C_{1}\cup C_{2}\cup\cdots\cup C_{t})}.$ In other words: the one-edge blocks have no impact upon the Betti numbers of a cactus graph. This fact shall eventually, in combination with Theorem \ref{Tigran}, enable us to demonstrate 
that 
for a cactus graph $G$, the global Betti 
numbers of a minimal free resolution of $\bfid{M(G)}$ determine the higher weights $\{d_i\}$ of $M(G).$ Note that the converse of this is rather trivial since for cactus graphs we have \[d_i=\sum_{j=1}^in_j,\] which implies that the higher weights determine the lengths $n_1,n_2,\ldots,n_t$ of the cycles of $G$ -- and therefore also the global Betti numbers of $\bfid{M(G)}$ (according to the above remarks). 

Note also that, with $|E_G|=n$, the $S$-ideal $\bfid{M(G)}$ has a natural $\nnn{N}$-grading -- and thus also an $\nnn{N}$-graded minimal free resolution \[\begin{CD}
0@<<<F_0@<<<F_1@<<<\cdots@<<<F_l@<<<0,                                                                                                                                                                                                                                                             \end{CD}
\] where $F_i=\bigoplus_{\mathbf{a}\in\nnn{N}}S(-\mathbf{a})^{\beta_{i,\mathbf{a}}}.$ In that case, we clearly have \[\beta_{0,\sigma}=
	\begin{cases}
		1, & \text{if $\sigma$ is a basis of $M(G)$} \\
		0, & \text{elsewise,} \\
	\end{cases}\]  
which implies that the $\nnn{N}$-graded Betti numbers of \textit{any graph} determine \textit{not only} the higher weights, but the matroid $M(G)$ in its entirety. 

We now return to the ungraded case.
\begin{theorem}\label{Tal}
Let $G$ be a cactus graph containing $t\geq1$ cycles $C_{1},C_{2},\ldots,C_{t},$ with $C_{i}$ of length $n_i,$ and let $S=\polr{k}{x}{|E_G|}.$ Let $\sigma_i$ denote the $i$-th elementary symmetrical polynomial in the $n_1,\ldots,n_t,$ that is:
\begin{flalign*}
&\sigma_0=1\\
&\sigma_1=n_1+n_2+\cdots+n_t\\
&\ \ \  \ \vdots\\
&\sigma_j=\sum_{1\le k_1 < k_2 < \ldots < k_j \le t} n_{k_1} \ldots n_{k_j}\\
&\ \ \  \ \vdots\\
&\sigma_t=n_1n_2\cdots n_t.\\ 
\end{flalign*}
Then the facet ideal of $M(G)$ has ungraded minimal free resolution \[\begin{CD}
0@<<<S^{\beta_0}@<<<S^{\beta_1}@<<<\cdots@<<<S^{\beta_{t}}@<<<0,                                                                                                                      \end{CD}
\] 
where \[\beta_i=\sum_{j=0}^{i}(-1)^j\binom{t-j}{i-j}\sigma_{t-j}.\]
\end{theorem}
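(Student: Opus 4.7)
The plan is to combine the two remarks immediately preceding the theorem with Theorem~\ref{Tigran}, and then to perform a combinatorial manipulation on the resulting expression for $\beta_i$.

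First I would reduce to the ``cycles only'' case: since the minimal free resolution of $\bfid{M(C_m\cup E)}$ coincides with that of $\bfid{M(C_m)}$, iterating gives that the resolution of $\bfid{M(G)}$ agrees with the one of $\bfid{M(C_1\cup\cdots\cup C_t)}$. So I may assume the blocks of $G$ are precisely the $t$ cycles. Each cycle $C_j$ contributes a minimal resolution of length $l_j=1$ with $n_{0,j}=n_j$ and $n_{1,j}=n_j-1.$ Plugging into Theorem~\ref{Tigran}, the total length of the resolution is $l=t$ and
\[
\beta_i \;=\; \sum_{u_1+\cdots+u_t=i}\,n_{u_1,1}\,n_{u_2,2}\cdots n_{u_t,t}
       \;=\; \sum_{\substack{I\subseteq[t]\\|I|=i}}\prod_{j\in I}(n_j-1)\prod_{j\notin I}n_j,
\]
since each $u_j\in\{0,1\}$ and the set $I=\{j:u_j=1\}$ has size $i$. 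This will be my starting formula.

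Next I would transform this into the stated form by expanding $(n_j-1)$. Writing
\[
\prod_{j\in I}(n_j-1)\;=\;\sum_{K\subseteq I}(-1)^{|I|-|K|}\prod_{k\in K}n_k,
\]
and interchanging the order of summation, the monomial indexed by $L:=K\sqcup([t]\smallsetminus I)$ appears (the union is automatically disjoint since $K\subseteq I$). Setting $j=i-|K|$, one has $|L|=t-j$, and for a fixed $L$ of size $t-j$ the pairs $(I,K)$ giving rise to $L$ are in bijection with choices of $K\subseteq L$ of size $i-j$ (and then $I=([t]\smallsetminus L)\cup K$). Thus each $L$ with $|L|=t-j$ is counted with multiplicity $\binom{t-j}{i-j}$ and sign $(-1)^j$, whence
\[
\beta_i\;=\;\sum_{j=0}^{i}(-1)^j\binom{t-j}{i-j}\sum_{|L|=t-j}\prod_{l\in L}n_l\;=\;\sum_{j=0}^{i}(-1)^j\binom{t-j}{i-j}\sigma_{t-j},
\]
which is precisely the formula in the statement.

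The routine reductions (using Theorem~\ref{Tigran} and the one-edge-block remark) are easy; the only real work is the combinatorial re-indexing in the last step. That re-indexing is the most error-prone part: one has to be careful that the disjoint-union decomposition $L=K\sqcup([t]\smallsetminus I)$ really is a bijection between the pairs $(I,K)$ contributing to a given monomial $\prod_{l\in L}n_l$ and the choices of $K\subseteq L$ of a prescribed size. Once that bookkeeping is verified, the identity falls out.
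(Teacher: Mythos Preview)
Your proof is correct and follows essentially the same route as the paper: reduce to the union of cycles via the one-edge-block remark, apply Theorem~\ref{Tigran} to obtain
\[
\beta_i=\sum_{|I|=i}\prod_{j\in I}(n_j-1)\prod_{j\notin I}n_j,
\]
and then rewrite this in terms of the elementary symmetric polynomials. The only difference is cosmetic: the paper argues first by symmetry that $\beta_i=\sum_j(-1)^jc_{t-j}\sigma_{t-j}$ and then determines $c_{t-j}$ by a global count (yielding $\binom{i}{i-j}\binom{t}{i}/\binom{t}{t-j}=\binom{t-j}{i-j}$), whereas you expand $\prod_{j\in I}(n_j-1)$ directly and set up the bijection $(I,K)\leftrightarrow(L,K)$ with $L=K\sqcup([t]\smallsetminus I)$; both computations yield the same coefficient.
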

\begin{proof}Clearly, any block of $M(G)$ is either a single edge or a circuit. By the above comments then, the minimal free resolution of $\bfid{M(G)}$ is equal to the minimal free resolution of $\bfid{M(C_{1}\cup C_{2}\cup\cdots\cup C_{t})}.$ From Theorem \ref{Tigran} then, we see that $\bfid{M(G)}$ has minimal free resolution \[\begin{CD}
0@<<<S^{\beta_0}@<<<S^{\beta_1}@<<<\cdots@<<<S^{\beta_{t}}@<<<0,                                                                                                                      \end{CD}
\]
where \begin{equation}\label{lign}\beta_i=\sum_{\{\Sigma\subset\{1,2,\ldots,t\}:|\Sigma|=i\}}\big(\prod_{v\in \Sigma}(n_v-1)\prod_{v\notin \Sigma}n_v\big).\end{equation} 
This implies that for each $t-i\leq j\leq t,$ every possible monomial $(-1)^{t-j}n_{k_1}n_{k_2}\cdots n_{k_j}$ with $1\leq k_1<k_2<\cdots<k_j\leq t$ is a summand of $\beta_i$ considered as a monomial in $n_1,n_2,\cdots,n_t$ \textit{and}, furthermore, that all these monomials occur the same number of times as summands. We infer that \[\beta_i=\sum_{j=0}^{i}(-1)^jc_{t-j}\sigma_{t-j},\] for some $c_{t-j}\in\mathbb{N}.$ 

In order to determine $c_{t-j}$, first observe that the number of $\Sigma\subset \{1,2,\ldots,t\}$ with $|\Sigma|=i$ is $\binom{t}{i}$. For each such $\Sigma$, the number of monomials in \[\prod_{v\in \Sigma}(n_v-1)\prod_{v\notin \Sigma}n_v\] of degree $t-j$ is $\binom{i}{i-j}.$ Since the number of terms in $\sigma_{t-j}$ is $\binom{t}{t-j},$ we conclude that the coefficient of $\sigma_{t-j}$ in $\beta_i$ is $(-1)^j\frac{\binom{i}{i-j}\binom{t}{i}}{\binom{t}{t-j}}=(-1)^j\binom{t-j}{i-j}.$   
\qed\end{proof} 
\begin{theorem}
The higher weight hierarchy $\{d_i\}$ associated to the cycle matroid of a loop-free cactus graph $G$ is determined by the Betti numbers of the ungraded minimal free resolution of the facet ideal $\bfid{M(G)}$ of $G$. 
\end{theorem}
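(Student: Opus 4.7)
The plan is to combine Theorem \ref{Tal} with the remark made just before it that for a loop-free cactus graph the higher weights are given by $d_i = \sum_{j=1}^{i} n_j$ (with $n_1 \leq n_2 \leq \cdots \leq n_t$). Loop-freeness is essential here: it rules out circuits of size $1$, which would contribute to the higher weights while being invisible to the Betti numbers (loops appearing as single-edge blocks in the sense of Section \ref{cg}). Since $d_i$ depends only on the multiset $\{n_1, \ldots, n_t\}$ of cycle lengths, it suffices to recover this multiset from the ungraded Betti numbers $\beta_0, \beta_1, \ldots$ of $\bfid{M(G)}$.

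First I would read off $t$ from the Betti sequence. Since each $n_v \geq 2$, formula (\ref{lign}) shows that $\beta_i \geq \binom{t}{i} \geq 1$ for $0 \leq i \leq t$ and $\beta_i = 0$ for $i > t$, so $t$ equals the largest index with $\beta_i \neq 0$. Next I would invert the system
\[
\beta_i = \sum_{j=0}^{i}(-1)^j\binom{t-j}{i-j}\sigma_{t-j}
\]
from Theorem \ref{Tal} to recover the elementary symmetric polynomials $\sigma_0, \sigma_1, \ldots, \sigma_t$ in $n_1, \ldots, n_t$. Viewed as a linear system in the unknowns $\sigma_t, \sigma_{t-1}, \ldots, \sigma_0$ in that order, it is lower triangular: the coefficient of $\sigma_{t-i}$ in $\beta_i$ is $(-1)^i \binom{t-i}{0} = (-1)^i \neq 0$. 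Starting from $\sigma_t = \beta_0$, back-substitution determines each $\sigma_{t-i}$ uniquely from $\beta_0, \ldots, \beta_i$ and the previously computed symmetric polynomials.

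The collection $\sigma_0, \sigma_1, \ldots, \sigma_t$ determines the monic polynomial $\prod_{i=1}^{t}(x - n_i)$, and hence the multiset $\{n_1, \ldots, n_t\}$. Sorting yields the non-decreasing sequence of cycle lengths, from which $d_i = n_1 + n_2 + \cdots + n_i$ is read off directly. The argument is essentially bookkeeping given Theorem \ref{Tal} and the block-additivity of higher weights established in Section \ref{H-weights}; the only point requiring attention is the triangularity of the Betti-to-$\sigma$ system, which is a mild observation rather than a substantive obstacle.
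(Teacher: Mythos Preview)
Your proof is correct and follows essentially the same route as the paper: reduce to recovering the multiset $\{n_1,\ldots,n_t\}$, read off $t$ from the length of the resolution, invert the lower-triangular system relating the $\beta_i$ to the $\sigma_{t-j}$ (diagonal entries $(-1)^i$), and recover the $n_j$ as the roots of $\prod_{j}(x-n_j)$. Your justification that $\beta_i \geq \binom{t}{i} > 0$ for $0 \le i \le t$ via (\ref{lign}) and $n_v \ge 2$ is slightly more explicit than the paper's, but the argument is the same.
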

\begin{proof}
Recall that, by assumption, we have $n_1\leq n_2\leq\cdots\leq n_t$. The identity $d_i=\sum_{j=1}^in_j$, valid for cactus graphs, clearly implies that the lengths $n_1,n_2,\ldots,n_t$ determine the higher weights. It will therefore suffice to show that the Betti numbers determine the multiset $\{n_j\}.$ 

It is immediately clear from (\ref{lign}) that the number $t$ of cycles of $G$ is determined by the Betti numbers, seeing as it is equal to the length of the minimal free resolution. Furthermore, we notice that for each $i,$ the coefficient of $\sigma_{t-i}$ in $\beta_i$ is $(-1)^i.$ In particular we have $\sigma_t=\beta_0,$ which implies that (knowing all the $\beta_i$s) the equation \[\sigma_{t-i}=(-1)^i\left(\beta_i-\sum_{j=0}^{i-1}(-1)^j\binom{t-j}{i-j}\sigma_{t-j}\right)\] enables us to obtain the remaining $\sigma_i$s recursively. 

Now, the fact that the polynomial \[X^t-\sigma_1X^{t-1}+\sigma_2X^{t-2}-\cdots+(-1)^t\sigma_t\] has the unique multiset of roots $\{n_1,n_2,\ldots,n_t\}$ implies that if $H$ is a cactus graph containing cycles of length $m_1,m_2,\ldots,m_s$, and if the Betti numbers of $H$ are equal to those of $G,$ then certainly $s=t$ and \[\{n_1,n_2,\ldots,n_t\}=\{m_1,m_2,\ldots,m_t\}\] as multisets, which was what we needed to prove. 
\qed\end{proof}

Note that if $G$ contains loops we no longer have that the number $t$ of cycles in $G$ is equal to the number of non-zero Betti numbers, and the above proof fails in that case. If, on the other hand, the number $l$ of loops is \textit{known}, then \[\beta_t=\beta_{t-1}=\cdots=\beta_{t-l+1}=0,\] and the proof goes through unchanged.

\begin{rem} \label{topological}
The cycle matroid of a single cycle of length $n$ is of course the uniform matroid $U(n-1,n)$ where a set of bases consists of all edge subsets of cardinality $n-1$. For a cactus graph with $t$ cycles of lengths $n_1,\cdots,n_t$
we see that there are $n_1n_2\cdots n_t$ spanning trees each consisting of (the set corresponding to) $n_t-1$ edges from each cycle, and in addition all edges not contained in any cycle. The edges not contained in any cycle have no significance for the global Betti numbers $\beta_i$, so for simplicity we disregard them. Hence we may view the cycle matroid of the cactus graph as the multi-uniform matroid
$U = U\big((n_1-1,n_1),\cdots,(n_t-1,n_t)\big),$ whose ground set is \[\big([n_1]\times\{1\}\big)\cup \big([n_2]\times\{2\}\big)\cup\cdots\cup\big([n_t]\times \{t\}\big)\] and whose independent sets are all the sets of the form \[\big(I_{n_1}\times\{1\}\big)\cup \big(I_{n_2}\times\{2\}\big)\cup\cdots\cup\big(I_{n_t}\times \{t\}\big),\] where $I_{n_i}$ denotes a subset of $[n_i]$ whose cardinality is less than or equal to $(n_i-1)$. The looked-for Betti numbers of this matroidal facet ideal can in principle be found by using Hochster's formula (which is valid over any field $\field{k}$):
\[\beta_{i,\sigma} = \tilde{h}_{|\sigma|-i-1}(V_{\sigma}),\]
where $V$ is the Alexander dual of the matroid dual of $U.$ 

We do not rule out that applying Hochster's formula in such a way might give an alternative proof of Theorem \ref{Tal}, but so far we have not been able to perform the necessary calculations.
\end{rem}

\section{Counterexamples for outerplanar graphs}\label{og}

As mentioned in the introduction, cactus graphs are special instances of \textit{outerplanar} graphs:
\begin{definition}
A finite graph is said to be \textit{outerplanar} if it has an embedding in the plane in which every vertex lies on the boundary of the outer face. 
\end{definition}
In this section we present counterexamples showing that for outerplanar graphs in general, the Betti numbers may fail to determine the higher weights -- and vice versa. Note that these counterexamples are the smallest ones possible (in terms of number of edges). 

First, consider\\
\begin{tabular}{ccc}
\begin{tikzpicture}[shorten >=1pt,->]
$G_1$  
\tikzstyle{vertex}=[circle,fill=black!80,minimum size=2pt,inner sep=2pt]
  \node[vertex] (G_1) at (-1,1){};
  \node[vertex] (G_2) at (-2.5,0.5){};
  \node[vertex] (G_3) at (-3,-1){};
  \node[vertex] (G_4) at (-2,-2){};
  \node[vertex] (G_5) at (-1,-2.2){};
  \node[vertex] (G_6) at (0,-2.3){};
  \node[vertex] (G_7) at (1,-2){};
  \node[vertex] (G_8) at (2,-1){}; 
  \node[vertex] (G_9) at (2,0.2){};
  \node[vertex] (G_10) at (.6,1){}; 
\draw (G_1) -- (G_2) -- (G_3) -- (G_4) -- (G_5) -- (G_6) -- (G_7) -- (G_8) -- (G_9) -- (G_10) -- (G_1) -- cycle;
\draw (G_1) -- (G_3) -- cycle;
\draw (G_1) -- (G_6) -- cycle;
\draw (G_1) -- (G_9) -- cycle;
\draw (G_9) -- (G_7) -- cycle;
\end{tikzpicture}
&
and
&
\begin{tikzpicture}[shorten >=1pt,->]
$G_2$  
\tikzstyle{vertex}=[circle,fill=black!80,minimum size=2pt,inner sep=2pt]
  \node[vertex] (G_1) at (-1.5,1){};
  \node[vertex] (G_2) at (-3,0.5){};
  \node[vertex] (G_3) at (-3.5,-1){};
  \node[vertex] (G_4) at (-2.5,-2){};
  \node[vertex] (G_5) at (-1.5,-2.2){};
  \node[vertex] (G_6) at (-0.5,-2.3){};
  \node[vertex] (G_7) at (.5,-2){};
  \node[vertex] (G_8) at (1.2,-1){}; 
  \node[vertex] (G_9) at (1,0.2){};
  \node[vertex] (G_10) at (.1,1){}; 
\draw (G_1) -- (G_2) -- (G_3) -- (G_4) -- (G_5) -- (G_6) -- (G_7) -- (G_8) -- (G_9) -- (G_10) -- (G_1) -- cycle;
\draw (G_1) -- (G_4) -- cycle;
\draw (G_1) -- (G_5) -- cycle;
\draw (G_1) -- (G_9) -- cycle;
\draw (G_9) -- (G_7) -- cycle;
\end{tikzpicture}
\end{tabular}\\
\noindent The ordered set of Betti numbers related to these graphs are equivalent since both facet ideals have $\nn{N}$-graded minimal free resolution \[\minCDarrowwidth11pt\begin{CD}0@<<< S(-9)^{393}@<<< S(-10)^{1459} @<<< S(-11)^{2187} @<<< \end{CD}\]
\[\minCDarrowwidth11pt\begin{CD}\; \; \; \; \;\;\;\;\; @<<<S(-12)^{1652}@<<< S(-13)^{628} @<<< S(-14)^{96} @<<< 0.\end{CD}\] Their respective weight hierarchies, however, are $\{3, 6, 8, 11, 14\}$ and $\{3, 6, 9, 11, 14\},$ which shows how the Betti numbers may fail to determine the higher weight hierarchy. In both cases $d_1=2$ for the dual matroid -- see Remark \ref{mindist}.

Next, consider 

\begin{tabular}{ccc}\begin{tikzpicture}[shorten >=1pt,->]
$G_3$  
\tikzstyle{vertex}=[circle,fill=black!80,minimum size=2pt,inner sep=2pt]
  \node[vertex] (G_1) at (0,.7){};
  \node[vertex] (G_2) at (-1.2,0.3){};
  \node[vertex] (G_3) at (-1.5,-1){};
  \node[vertex] (G_4) at (-.6,-1.5){};
  \node[vertex] (G_5) at (.6,-1.5){};
  \node[vertex] (G_6) at (1.5,-1){};
  \node[vertex] (G_7) at (1.2,0.3){};
  \draw (G_1) -- (G_2) -- (G_3) -- (G_4) -- (G_5) -- (G_6) -- (G_7) -- (G_1) -- cycle;
  \draw (G_1) -- (G_4) -- cycle;
  \draw(G_1) -- (G_6) -- cycle;
\end{tikzpicture}
&
 and
&
\begin{tikzpicture}[shorten >=1pt,->]
$G_4$  
\tikzstyle{vertex}=[circle,fill=black!80,minimum size=2pt,inner sep=2pt]
  \node[vertex] (G_1) at (0,.7){};
  \node[vertex] (G_2) at (-1.2,0.3){};
  \node[vertex] (G_3) at (-1.5,-1){};
  \node[vertex] (G_4) at (-.6,-1.5){};
  \node[vertex] (G_5) at (.6,-1.5){};
  \node[vertex] (G_6) at (1.5,-1){};
  \node[vertex] (G_7) at (1.2,0.3){};
  \draw (G_1) -- (G_2) -- (G_3) -- (G_4) -- (G_5) -- (G_6) -- (G_7) -- (G_1) -- cycle;
  \draw (G_1) -- (G_3) -- cycle;
  \draw(G_1) -- (G_6) -- cycle;
\end{tikzpicture}
\end{tabular}

\noindent The graphic matroids related to these two outerplanar graphs have equivalent weight hierarchies, namely $\{3,6,9\}.$ However, the $\nn{N}$-graded minimal free resolutions \[0 \leftarrow S(-6)^{41} \leftarrow S(-7)^{92} \leftarrow S(-8)^{70} \leftarrow S(-9)^{18} \leftarrow 0\] and \[0 \leftarrow S(-6)^{39} \leftarrow S(-7)^{86} \leftarrow S(-8)^{64} \leftarrow S(-9)^{16} \leftarrow 0\] of $\bfid{M(G_3)}$ and $\bfid{M(G_4)}$, 
respectively, 
show that the higher weights fail to determine the Betti numbers for this particular pair.

\section{Acknowledgments}
The authors would like to thank Professor Andrei Prasolov, for his help with Lemmas \ref{Andrei1} and  \ref{Andrei2} and for taking an interest in our work. The first named author also thanks the organizers of the 12th ALGA Meeting IMPA, Rio de Janeiro, August 2012, for a most stimulating and rewarding conference.


\begin{thebibliography}{9}
\bibitem{ER} J.A.~Eagon, V.~Reiner, \emph{Resolutions of Stanley-Reisner rings and Alexander duality}, Journal of Pure and Applied Algebra \textbf{130} (1998) 265-275.

\bibitem{Far} S.~Faridi, \emph{The facet ideal of a simplicial complex}, Manuscripta Math. \textbf{109} no.~2 (2002) 159-174.

\bibitem{JV} T.~Johnsen, H.~Verdure, \emph{Hamming weights and Betti numbers of Stanley-Reisner rings associated to matroids}, Applicable Algebra in Engineering, Communication and Computing \textbf{24}, no.~1 (2013) 73-93.

\bibitem{MS} E.~Miller, B.~Sturmfels, \textit{Combinatorial Commutative Algebra}, Graduate Texts in Mathematics \textbf{227} (2005), Springer-Verlag.

\bibitem{Nor} D.G.~Northcott, \emph{Homological Algebra}, Cambridge University Press, 1966.

\bibitem{Oxl} J.G.~Oxley, \emph{Matroid Theory}, Oxford university press, 1992.

\bibitem{Wei} V.K.~Wei, \emph{Generalized Hamming Weights for Linear Codes}, IEEE Transactions on Information Theory \textbf{37} (1991) 1412-1418.





\end{thebibliography}
\end{document}